\newtheorem{theorem}{Theorem}
\newtheorem{lemma}[theorem]{Lemma}
\newtheorem{conj}[theorem]{Conjecture}
\newcommand{\R}{\mathbb R}
\newcommand{\C}{\mathbb C}
\DeclareMathOperator{\Oct}{\mathbb{O}}
\DeclareMathOperator{\HH}{\mathbb{H}}
\theoremstyle{definition}
\newtheorem*{definition*}{Definition}
\newtheorem{example}[theorem]{Example}
\theoremstyle{remark}
\numberwithin{equation}{section}
\DeclareMathOperator{\Image}{Im}
\DeclareMathOperator{\ssl}{sl}
\newenvironment{proof-sketch}{\noindent{\bf Sketch of Proof}\hspace*{1em}}{\qed\bigskip}
\newenvironment{proof-idea}{\noindent{\bf Proof Idea}\hspace*{1em}}{\qed\bigskip}
\newenvironment{proof-of-lemma}[1]{\noindent{\bf Proof of Lemma #1}\hspace*{1em}}{\qed\bigskip}
\newenvironment{proof-of-prop}[1]{\noindent{\bf Proof of Proposition #1}\hspace*{1em}}{\qed\bigskip}
\newenvironment{proof-of-thm}[1]{\noindent{\bf Proof of Theorem #1}\hspace*{1em}}{\qed\bigskip}
\newenvironment{proof-attempt}{\noindent{\bf Proof Attempt}\hspace*{1em}}{\qed\bigskip}
\thanks{We would like to thank Alexei Kanel-Belov for fruitful discussions related to this paper.}
\begin{document}

\title[Polynomial evaluations on Jordan algebras]{Evaluations of multilinear polynomials on low rank Jordan algebras}

\author{Sergey Malev
  \and
  Roman Yavich
  \and
  Roee Shayer
}

\newcommand{\Addresses}{{% additional braces for segregating \footnotesize
  \bigskip
  \footnotesize

  Sergey Malev (Corresponding author), \textsc{Department of Mathematics, Ariel University,
Ariel, Israel}\par\nopagebreak
  \textit{E-mail address}, Sergey Malev: \texttt{sergeyma@ariel.ac.il}

  \medskip

  Roman Yavich, \textsc{Department of Mathematics, Ariel University,
Ariel, Israel}\par\nopagebreak
  \textit{E-mail address}, Roman Yavich: \texttt{romany@ariel.ac.il}

  \medskip

  Roee Shayer, \textsc{Department of Mathematics, Ariel University,
Ariel, Israel}\par\nopagebreak
  \textit{E-mail address}, Roee Shayer: \texttt{roeeshayer10@gmail.com}

}}

\maketitle

\begin{abstract}
In this paper we prove the generalized Kaplansky conjecture for Jordan algebras of the type $J_n$,  in particular for self-adjoint $2\times 2$ matrices over $\R$, over $\C$, $\HH$ and $\Oct$. In fact, we prove that the image of multilinear polynomial must be either $\{0\}$, $\mathbb{R}$, the space $V$ of pure elements, or $J_n$.  
\end{abstract}

\keywords{Keywords: Lvov-Kaplansky Conjecture, Non-associative polynomials, Jordan algebra}
\section{Introduction}
%\subsection{Background and history of known results related to the project.}

Images of polynomials evaluated on algebras is
one of the most important areas in the modern algebra. 
This question was considered by Bre{\v s}ar and his school, and we recommend the reader to consider very interesting and deep works \cite{Br,BK,V}.
For a more complete survey we recommend \cite{BMRY}, and for a similar survey on related topics in group theory one can see \cite{GKP}.

One of the central conjectures regarding possible evaluations of multilinear polynomials on matrix algebras was attributed to Kaplansky and formulated by L'vov in \cite{Dn}:
\begin{conj}[L'vov-Kaplansky] \label{kapl}Let $K$ be an infinite field and $n\in \mathbb{N}$. If $p$ is a noncommutative multilinear  $K$-polynomial then the image of $p$ evaluated on the matrix algebra $M_n(K)$ is a vector subspace of $M_n(K)$.

\end{conj}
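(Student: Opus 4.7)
The plan is to verify the three vector subspace axioms for $\Image(p) \subseteq M_n(K)$. Multilinearity immediately gives the first two: substituting $x_1 = 0$ yields $0 \in \Image(p)$, and $p(\lambda x_1, x_2, \ldots, x_m) = \lambda\, p(x_1, \ldots, x_m)$ gives closure under scalar multiplication. All the content is in additive closure, and I would attack it by first classifying the possible images and only then proving equality.

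Step one is to exploit conjugation equivariance: since $w(g x_1 g^{-1}, \ldots, g x_m g^{-1}) = g\, w(x_1, \ldots, x_m)\, g^{-1}$ for every noncommutative monomial $w$, the image is a union of $GL_n(K)$-conjugacy classes and is therefore determined by the set of characteristic polynomials it realizes. Step two is the trace dichotomy: either $p$ is a sum of commutators in the free algebra, in which case $\trace p(x_1, \ldots, x_m) \equiv 0$ and $\Image(p) \subseteq \ssl_n(K)$; or the commutative image of $p$ is nonzero, so that evaluation on scalar matrices produces a nonzero element of $K \cdot I$. Combined with multilinearity and conjugation invariance, these two observations restrict the candidate images to $\{0\}$, $K \cdot I$, $\ssl_n(K)$, or $M_n(K)$.

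Step three is to upgrade ``contained in'' to ``equals'' whenever $p$ is not a polynomial identity of $M_n(K)$. The natural scheme is (a) a dimension count on the morphism $p : M_n(K)^m \to M_n(K)$ to conclude dominance onto the candidate subspace; (b) use of $GL_n(K)$-invariance to show every characteristic polynomial consistent with the trace condition is realized in the image; and (c) a perturbation or limit argument to force every Jordan type with a given characteristic polynomial to be attained, not merely a generic one. For $n = 2$ this scheme goes through cleanly because a conjugacy class is essentially determined by $\trace$ and $\det$, with the residual degenerate locus handled directly.

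The main obstacle is step (c) for $n \geq 3$: one must control which non-semisimple conjugacy classes appear, given the combinatorial proliferation of Jordan types and the failure of individual conjugacy classes to be Zariski closed. This is exactly where the conjecture remains open in the published literature in full generality, and any complete proof will either need to exploit additional algebraic structure of $p$ (as is done for power-central or Lie polynomials in the known partial results) or to introduce genuinely new deformation-theoretic tools for polynomial maps between matrix algebras. For the present paper the Jordan-algebra setting offers a substantial simplification, since the analogue of $GL_n$ acts with far fewer orbits on $J_n$, which is presumably the route the authors exploit in their main theorem.
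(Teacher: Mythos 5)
The statement you are addressing is Conjecture \ref{kapl}, the L'vov--Kaplansky conjecture itself; the paper offers no proof of it (nor does any published source for general $n$), so there is no ``paper's proof'' to compare against, and your proposal is, by your own admission in step (c), not a proof but a strategy outline whose decisive step is open. That honesty is to your credit, but two of the steps you present as settled are not. In step one, the fact that the image is a union of $GL_n(K)$-conjugacy classes does not make it ``determined by the set of characteristic polynomials it realizes'': distinct Jordan types share a characteristic polynomial, which is precisely the difficulty you later defer to step (c). In step two, the dichotomy is not exhaustive: a multilinear polynomial can vanish on every tuple of scalar matrices (coefficient sum zero) without having identically zero trace --- for instance $p(x,y,z)=xyz-zyx$ --- so the failure of ``sum of commutators'' does not force scalar matrices to witness a nonzero central value, and the reduction to the four candidate images $\{0\}$, $K\cdot I$, $\ssl_n(K)$, $M_n(K)$ is itself only the conjectural reformulation (Conjecture \ref{Kapl2}), known to be equivalent to Conjecture \ref{kapl} but established only for $n=2$ in special cases (\cite{BMR1,M1}).

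What the paper actually proves is Theorem \ref{main}, the analogue for the Jordan algebras $J_n$, and there your closing intuition is exactly right: the automorphism group of $J_n$ acts on the pure part $V$ as the full orthogonal group, so orbits are classified by the real part together with the single invariant $\|v\|$ (Lemma \ref{orbit}), and the parity argument of Lemma \ref{basic} replaces the trace dichotomy entirely. If you intend to supply a complete argument, it should target Theorem \ref{main}, not Conjecture \ref{kapl}.
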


It is well-known (\cite{BMR1,BMR2,BMR3,BMR4,BMRY,M1,M2}) that this conjecture can be reformulated as follows:
\begin{conj}\label{Kapl2} If $p$ is a noncommutative  multilinear polynomial evaluated on the matrix algebra
$M_n (K)$, then $\Image p$ is either $\{0\}, K, \ssl_n (K),$ or $M_n (K).$ Here $K$ indicates the set
of scalar matrices and $\ssl_n (K)$ is the set of  matrices with trace equal to zero.
\end{conj}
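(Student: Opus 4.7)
The plan is to derive the four-way classification from Conjecture~\ref{kapl} combined with the $GL_n(K)$-equivariance of multilinear maps: once $\Image p$ is known to be a vector subspace, one identifies it among the $GL_n(K)$-invariant subspaces of $M_n(K)$ under the conjugation action.

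\textbf{Step 1 (equivariance).} For any multilinear $p$, the identity
\[
p(g a_1 g^{-1}, \ldots, g a_m g^{-1}) = g\, p(a_1, \ldots, a_m)\, g^{-1}
\]
shows that $\Image p$ is stable under conjugation by $GL_n(K)$, and multilinearity gives $\lambda \cdot \Image p \subseteq \Image p$ for every $\lambda \in K$. Granted Conjecture~\ref{kapl}, $\Image p$ is therefore a $GL_n(K)$-invariant $K$-subspace of $M_n(K)$.

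\textbf{Step 2 (invariant subspaces).} Over an infinite field, in characteristic zero or coprime to $n$, the adjoint representation decomposes as $M_n(K) = K \cdot I \oplus \ssl_n(K)$ with each summand irreducible. I would verify irreducibility of $\ssl_n(K)$ by a direct orbit-span argument: starting from a nonzero trace-zero matrix in a nontrivial invariant subspace, conjugation by elementary and diagonal matrices together with subtraction produces each $e_{ij}$ ($i \neq j$) and each $e_{ii} - e_{jj}$, which together span $\ssl_n(K)$. Consequently the only $GL_n(K)$-invariant $K$-subspaces of $M_n(K)$ are $\{0\}$, $K \cdot I$, $\ssl_n(K)$, and $M_n(K)$, matching the four alternatives in the statement.

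\textbf{Step 3 (matching the cases).} Distinguish by two polynomial features of $p$. If $p$ is a polynomial identity, $\Image p = \{0\}$. If every evaluation of $p$ is scalar, scaling one input by $\lambda \in K$ fills out $K \cdot I$. Otherwise $p$ takes some non-scalar value, and the dichotomy $\trace(p) \equiv 0$ versus $\trace(p) \not\equiv 0$ (as polynomial identities in the entries of the arguments), together with Step~2, forces $\Image p = \ssl_n(K)$ or $\Image p = M_n(K)$, respectively.

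The decisive difficulty is \emph{exactly} the invocation of Conjecture~\ref{kapl} in Step~1: proving additive closure of $\Image p$ is the full L'vov-Kaplansky conjecture, which remains open in general for $n \geq 3$. Everything else is representation theory of $GL_n(K)$ on $M_n(K)$ together with the orbit-span lemma, so the equivalence of Conjectures~\ref{kapl} and~\ref{Kapl2} reduces to supplying this single missing ingredient; this is why the present paper, as its abstract indicates, attacks the analogous (and more tractable) subspace question in the Jordan setting, where additional structural tools from the algebras $J_n$ are available.
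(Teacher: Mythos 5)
The statement you are proving is labelled as a \emph{conjecture}, and the paper gives no proof of it: it simply asserts that the reformulation is well known and points to \cite{BMR1,BMR2,BMR3,BMR4,BMRY,M1,M2}. So there is no in-paper argument to compare against; what you have written is a reconstruction of the standard equivalence between Conjecture~\ref{kapl} and Conjecture~\ref{Kapl2}, and as such it is essentially right. Your Step 1 (conjugation-equivariance plus homogeneity under scalars) and Step 2 (the decomposition $M_n(K)=K\cdot I\oplus \ssl_n(K)$ with $\ssl_n(K)$ irreducible under the adjoint action, so the only invariant subspaces are the four listed) constitute exactly the argument the cited literature uses, and you correctly isolate the one genuinely open ingredient: additive closure of $\Image p$, i.e.\ Conjecture~\ref{kapl} itself. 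Two small points to tighten. First, the converse implication (Conjecture~\ref{Kapl2} $\Rightarrow$ Conjecture~\ref{kapl}) should be stated, even though it is trivial, since the claim is an equivalence of conjectures. Second, your Step 2 explicitly assumes $\operatorname{char} K \nmid n$; when the characteristic divides $n$ one has $K\cdot I\subseteq\ssl_n(K)$, the sum is no longer direct, and the lattice of invariant subspaces needs a separate (though still elementary) verification --- the conjecture as stated is for arbitrary infinite fields, so this case cannot be waved away. With those caveats your Step 3 case analysis (PI, central, trace-identically-zero, or neither) correctly matches the image to one of the four alternatives, conditional on the subspace property.
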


When $n=2$, for the case of $K$ being quadratically closed Conjecture~\ref{Kapl2} was proved in \cite{BMR1}. In \cite{M1} it was proved for the case of $K=\R$, and an interesting result was obtained for arbitrary fields.

For $n>2$ this question was considered in \cite{BMR2,BMR3,BMRY} and partial results were obtained.\newline

% In the same paper it was said that this question is interesting only for simple algebras, since for non-simple algebras it may be answered negatively. Indeed, this conjecture fails for the Grassmann algebra.
An interesting question is whether Conjecture \ref{kapl} holds in algebras other than matrix algebras.
Note that matrix algebras are simple and finite-dimensional. Thus, we come to the following generalized conjecture:
\begin{conj}\label{gen}
Let $F$ be an infinite field. If $A$ is a simple  finite-dimensional algebra 	over $F$ then the evaluation of any multilinear polynomial over $A$ is a vector subspace of $A$.
\end{conj}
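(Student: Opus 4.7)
The plan is to attack Conjecture \ref{gen} by reducing along the classification of simple finite-dimensional algebras, and then to control $\Image p$ through its invariance properties. First, I would observe that for any multilinear polynomial $p$ and any $\sigma\in\operatorname{Aut}(A)$, one has $\sigma(p(a_1,\ldots,a_n))=p(\sigma(a_1),\ldots,\sigma(a_n))$, so $\Image p$ is $\operatorname{Aut}(A)$-stable. Multilinearity of $p$ gives closure under scaling by $F$, so $\Image p$ is automatically an $\operatorname{Aut}(A)$-invariant cone. The only missing ingredient for it to be a subspace is closure under addition.

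Next I would split by the type of simple algebra. In the associative case, Wedderburn--Artin gives $A\cong M_m(D)$ for a finite-dimensional division $F$-algebra $D$; this is precisely the setting of Conjectures \ref{kapl} and \ref{Kapl2}. For non-associative simple algebras I would invoke the known classifications: simple Jordan algebras via Albert (with the present paper handling the low-rank $J_n$), simple alternative algebras as either associative or octonionic (involving $\Oct$), simple Lie algebras by Cartan over characteristic zero with Chevalley-type analogues in positive characteristic, and Malcev algebras via their parallel theory. In each case the simple algebra $A$ has a very short list of $\operatorname{Aut}(A)$-stable $F$-subspaces, typically $\{0\}$, a scalar/centre-like piece, a ``traceless'' complement (analogous to $\ssl_n(K)$), and $A$ itself; this short list is exactly what the target conclusion predicts.

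With that structural picture in place, the strategy is two-step: (i) show that the $F$-linear span $W$ of $\Image p$ is one of the $\operatorname{Aut}(A)$-stable subspaces forced by simplicity, using trace forms, Killing forms, and character-theoretic bookkeeping; and (ii) upgrade ``span of the image'' to ``image'' itself, showing via Waring-type identities or a Zariski-density/dimension count on the morphism $A^n\to A$, $(a_1,\ldots,a_n)\mapsto p(a_1,\ldots,a_n)$, that every $w\in W$ is attained as a single value of $p$. The hard part is step (ii): even in the associative case this is Conjecture \ref{Kapl2}, unresolved for $m\ge 3$, and the present paper's success for $J_n$ relies on the low-rank quadratic structure rather than on any universal mechanism. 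So any honest proof of Conjecture \ref{gen} in full generality would presumably require, as a prerequisite, a solution of the matrix case for all $m$, together with a uniform mechanism explaining why ``image equals span of image'' once simplicity pins the span down to one of the canonical $\operatorname{Aut}(A)$-stable subspaces; I do not see a route that bypasses the matrix case, so the most realistic proposal is to execute step (i) rigorously across the classification and to reduce step (ii) to the associative problem augmented by the non-associative analogues that the methods of this paper are designed to attack.
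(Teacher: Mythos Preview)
The statement you are addressing is presented in the paper as an open conjecture, not as a theorem; the paper offers no proof of it, so there is nothing to compare your proposal against. What the paper actually proves is Theorem~\ref{main}, the special case $A=J_n$, and the authors explicitly frame Conjecture~\ref{gen} as a generalization motivated by, but going far beyond, that result.

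Your proposal is a strategy sketch rather than a proof, and you correctly identify the decisive gap yourself: step~(ii) already contains Conjecture~\ref{Kapl2} for all $m$, which is open. But there is a second, more structural gap in step~(i). Conjecture~\ref{gen} is stated for an arbitrary simple finite-dimensional algebra over $F$, with no variety (associative, alternative, Jordan, Lie, Malcev, \ldots) specified. There is no classification of simple finite-dimensional non-associative algebras in this generality, so the plan to ``reduce along the classification'' cannot even begin outside the handful of varieties you list. Moreover, the assertion that a simple $A$ always has only a short list of $\operatorname{Aut}(A)$-stable subspaces is not established and is not obviously true outside the classical settings; over non-closed $F$ or in positive characteristic the $\operatorname{Aut}(A)$-module structure on $A$ can be considerably richer. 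So even granting a full solution to the matrix case, your outline would not yield Conjecture~\ref{gen} as stated.
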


The condition of being a simple algebra is essential, since Conjecture \ref{gen} fails for the (non-simple) Grassman algebra, as mentioned in \cite{M2}.\newline

 A well-known theorem of Frobenius says that the only associative finite-dimensional division algebras over $\mathbb{R}$ are the field of real numbers $\mathbb{R}$, the field complex numbers $\mathbb{C}$ and the algebra of quaternions $\mathbb{H}$.\newline
Not requiring the condition of being associative, a result of Adams (\cite{ad}) says that a real finite-dimensional division algebra must have dimension 1, 2, 4, or 8.
Moreover, there are only four finite-dimensional real composition algebras. These are $\mathbb{R}$, $\mathbb{C}$, $\mathbb{H}$ and the algebra $\mathbb{O}$ of octonions.\newline

In \cite{M2} the problem of evaluation of polynomials was considered for the algebra of quaternions with the Hamilton multiplication and it was shown that any evaluation of a multilinear polynomial  is a vector space.
 Thus, as $\mathbb{O}$ is the only $8$-dimensional real division algebra known at the time of writing, it is important to consider the corresponding problem for evaluation of polynomials on $\mathbb{O}$. We consider the following conjecture on a paper being prepared:
\begin{conj}\label{oct}
If $p(x_1,\dots,x_n)$ is a non-associative  non-commutative multilinear  polynomial over $\mathbb{R}$, then the image of $p(x_1,\dots,x_n)$ evaluated on $\mathbb{O}$ is a real  vector subspace of $\mathbb{O}$.
\end{conj}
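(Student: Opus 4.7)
The plan is to exploit the symmetry of the octonions. The automorphism group $G_2=\operatorname{Aut}(\mathbb{O})$ acts by algebra automorphisms, fixes $\mathbb{R}\subset\mathbb{O}$ pointwise, and acts transitively on every sphere of the $7$-dimensional space $V$ of pure octonions. Since evaluating $p$ commutes with automorphisms, $\Image p$ is $G_2$-invariant; multilinearity of $p$ gives $\lambda\cdot\Image p\subseteq\Image p$ for every $\lambda\in\mathbb{R}$; and substituting $x_1=0$ yields $0\in\Image p$. These three properties force $\Image p$ to be a union of $G_2$-orbits --- individual points on $\mathbb{R}$, or $6$-spheres $\{x\in\mathbb{O}:\Sc(x)=a,\ |x-\Sc(x)|=r\}$ --- stable under the simultaneous scaling $(a,r)\mapsto(\lambda a,|\lambda|r)$.

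The next step is to analyze the projections $S:=\Sc(\Image p)\subseteq\mathbb{R}$ and $W:=\{x-\Sc(x):x\in\Image p\}\subseteq V$. Each is closed under $\mathbb{R}$-scaling and $W$ is also $G_2$-invariant, so $S\in\{\{0\},\mathbb{R}\}$ and $W\in\{\{0\},V\}$ (the only $G_2$-invariant scaling-closed subsets of $V$ containing $0$). The combinations $(S,W)=(\{0\},\{0\}),(\mathbb{R},\{0\}),(\{0\},V)$ immediately give $\Image p=\{0\},\mathbb{R},V$ respectively. The main case is $(S,W)=(\mathbb{R},V)$, where one must show $\Image p=\mathbb{O}$, i.e., rule out any nontrivial coupling between scalar and vector components.

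For this I would induct on $n$. For each $i$ substitute $x_i=1$ (well defined since $1\in\mathbb{R}$ lies in the nucleus of $\mathbb{O}$) to obtain a multilinear polynomial $p_i$ in $n-1$ variables. Multilinearity yields
$$
p(x_1,\ldots,x_i+c,\ldots,x_n) - p(x_1,\ldots,x_n) = c\cdot p_i(x_1,\ldots,x_{i-1},x_{i+1},\ldots,x_n)
$$
for every $c\in\mathbb{R}$, so $\Image p$ is closed under translation by every element of $\Image p_i$. By the inductive hypothesis each $\Image p_i$ is a subspace; being also $G_2$-invariant, it must be one of $\{0\},\mathbb{R},V,\mathbb{O}$. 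Setting $U:=\sum_{i=1}^n\Image p_i$, the set $\Image p$ is a union of cosets of $U$. If $U=\mathbb{O}$ we are done; if $U=V$ then $\Image p=S+V=\mathbb{R}+V=\mathbb{O}$; similarly if $U=\mathbb{R}$ then $\Image p=\mathbb{R}+W=\mathbb{R}+V=\mathbb{O}$.

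The main obstacle is the residual subcase $U=\{0\}$, where $p_i\equiv 0$ for every $i$. Here I would aim to prove the key lemma: if $p_i\equiv 0$ for each $i$, then $\Sc(p)\equiv 0$ as well, which forces $S=\{0\}$, contradicting the standing assumption $S=\mathbb{R}$. This lemma is the octonion analog of the classical associative fact that multilinear polynomials vanishing under every scalar substitution lie in the commutator ideal (and hence have trace $0$); the proof should combine the observation that both commutators $[a,b]$ and associators $(a,b,c)=(ab)c-a(bc)$ in $\mathbb{O}$ are purely imaginary with a structural description of ``slot-vanishing'' multilinear polynomials as sums of expressions built from nested commutators and associators, which one can extract from the $G_2$-representation decomposition of $\mathbb{O}^{\otimes n}$ together with the alternative (Moufang) identities.
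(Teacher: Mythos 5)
First, a point of reference: the statement you are proving is not proved in this paper at all. It appears only as Conjecture~\ref{oct}, announced as the subject of ``a paper being prepared''; the result actually established here, Theorem~\ref{main}, concerns the commutative Jordan algebras $J_n$ (including $J_{10}$ of Hermitian $2\times 2$ octonionic matrices), not the non-commutative, non-associative algebra $\Oct$ itself. So your proposal must stand on its own, and it has two genuine gaps. The first is the coset argument. Multilinearity gives, for a \emph{fixed} tuple,
$$p(x_1,\dots,x_i+c,\dots,x_n)=p(x_1,\dots,x_n)+c\,p_i(x_1,\dots,x_{i-1},x_{i+1},\dots,x_n),$$
which only lets you translate the value $p(x_1,\dots,x_n)$ by real multiples of the \emph{particular} element $p_i(x_1,\dots,x_{i-1},x_{i+1},\dots,x_n)$ attached to that same tuple --- not by an arbitrary element of $\Image p_i$. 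Hence the conclusion that $\Image p$ is a union of cosets of $U=\sum_i\Image p_i$ does not follow, and the case $(S,W)=(\R,V)$ is not settled. (This compatibility problem is exactly what the paper's proof of Theorem~\ref{main} is engineered to avoid: it builds the chain $A_1,\dots,A_{m+1}$, locates the single slot where the image first leaves $\R$, and then varies only that slot by a two-parameter family $xr_i+y\tilde r_i$, so that the real value and the pure value are realized on compatible tuples. Some such device is needed in your argument as well.)

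The second gap is the key lemma in the residual subcase $U=\{0\}$: the claim that a multilinear polynomial vanishing under the substitution $x_i=1$ in every slot has identically zero scalar part on $\Oct$. The claim is plausible --- for $n\le 3$ it can be verified directly using that the trace form of $\Oct$ is symmetric and associative, so that $\Sc$ of a degree-$3$ monomial depends only on the cyclic class of the ordering --- but your proposed proof is only a gesture. The structural assertion that every slot-vanishing multilinear non-associative polynomial is a sum of expressions ``built from'' nested commutators and associators is itself a nontrivial statement about the free non-associative (or free alternative) algebra that you do not prove; and even granting it, being built from purely imaginary pieces does not give $\Sc=0$, since for instance $\Sc\bigl(x\cdot((yz)w-y(zw))\bigr)$ is not identically zero on $\Oct$. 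Relatedly, for $n\ge 4$ the scalar part of a monomial depends on the bracketing and not only on the cyclic order of the variables (precisely because associators can have nonzero pairing with a fourth factor), so the low-degree computation does not propagate for free. As it stands the proposal is an outline with these two essential steps missing rather than a proof.
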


In \cite{MP} a problem of evaluations of non-associative commutative polynomials on the Rock-Paper-Scissors algebra was considered. Polynomials are commutative since the Rock-Paper-Scissors algebra is commutative.
%\subsection{Explanation of the innovations made in the project.}
In the present paper we consider non-associative commutative multilinear  polynomials evaluated on Jordan algebras. Jordan algebras were first introduced by Pascual Jordan (1933). 
For more information about Jordan algebras see \cite{ZSSS}. 

\section{Preliminaries}

Let us recall here the definition of Jordan algebras:
\begin{definition*}
A Jordan algebra is a non-associative algebra whose multiplication $\circ$ satisfies the following axioms: 
\begin{enumerate}
    \item $x\circ y=y\circ x$ (commutativity)
    \item $(x\circ y)\circ (x\circ x)=x\circ (y\circ (x\circ x))$ (Jordan identity).
\end{enumerate}
\end{definition*}
According to the first axiom, all Jordan algebras are commutative, and thus we will consider evaluations of commutative polynomials. 
Recall also that for any associative algebra $(A,\cdot,+)$ over a field of characteristic not equal to $2$ one can construct so called {\it special Jordan algebra} with product $\circ$ defined as $$x\circ y=\frac{x\cdot y+y\cdot x}{2}.$$
Note that MacDonald's Theorem (see \cite{LM, MC}) says that if a polynomial identity in three
variables $x, y, z$ which is linear in $z$ holds for all special
Jordan algebras, it holds for all Jordan algebras. Jordan algebras which are not special are called {\it exceptional Jordan algebras.}

An important class of exceptional Jordan algebras are {\it Hermitian Jordan algebras}. These are algebras defined on self-adjoint subsets of associative algebras with standard Jordan multiplication (defined in the same way as in special Jordan algebras). Usually these sets do not form an associative algebra since they are not closed under the standard associative multiplication, and so a corresponding Jordan algebra is exceptional. Nevertheless these sets are closed under Jordan multiplication.

We will consider evaluations of arbitrary degree multilinear polynomials on algebras of type $J_n$ which we define as follows: the base of $J_n$ is the set 
$\{e_0=1,e_1,\dots,e_{n-1}\}$, and the product $\circ$ is defined as $1\circ x=x\circ 1=x$ for any $x$, $e_i\circ e_j=\delta_{ij}$ where $\delta_{ij}$ is the Kronecker delta. Note that type $J_n$ is a particular case of the Jordan algebra of a nondegenerate symmetric bilinear form.
Note that the two-dimensional algebra $J_2$ is isomorphic to the split complex algebra. This is a simple case: $J_2$ is commutative, and associative, so all polynomial evaluations on $J_2$ are or $\{0\}$ or all the algebra.

There are at least four different Hermitian Jordan algebras of this type which are of interest for us:
\begin{enumerate}
    \item The three-dimensional algebra $J_3$ of self-adjoint $2\times 2$ real matrices with standard Jordan product $x\circ y=\frac{xy+yx}{2}$.
Indeed, one can take the following basis: $1$ is the identity matrix, 
$e_1$ is $\begin{pmatrix}
1 & 0\\
0 & -1
\end{pmatrix}$,
and
$e_2$ is $\begin{pmatrix}
0 & 1\\
1 & 0
\end{pmatrix}$.
It is not difficult to see that $e_1\circ e_1=e_2\circ e_2=1$ and $e_1\circ e_2=e_2\circ e_1=0$.
\item The four-dimensional algebra $J_4$ of self-adjoint $2\times 2$ complex matrices with standard Jordan product $x\circ y=\frac{xy+yx}{2}$.
Indeed, here we can take a basis $\{e_0=1, e_1,e_2,e_3\}$, where first three elements are the same as in $J_3$ and 
$e_3=\begin{pmatrix}
0 & i\\
-i & 0
\end{pmatrix}$.
One can check that $e_i\circ e_j=\delta_{ij}1$ for any $i,j>0$.
\item The six-dimensional algebra $J_6$ of  self-adjoint $2\times 2$ quaternionic matrices with standard Jordan product $x\circ y=\frac{xy+yx}{2}$.
Indeed, here we can take a basis $\{e_0=1, e_1,\dots,e_5\}$, where the first four elements are the same as in $J_4$ and 
$e_4=\begin{pmatrix}
0 & j\\
-j & 0
\end{pmatrix}$,
$e_5=\begin{pmatrix}
0 & k\\
-k & 0
\end{pmatrix}$.

One can check that $e_i\circ e_j=\delta_{ij}1$ for any $i,j>0$ here as well.
\item The ten-dimension algebra $J_{10}$ of  hermitian $2\times 2$ octonionic matrices with standard Jordan product $x\circ y=\frac{xy+yx}{2}$.
Here we can take a basis $\{e_0=1, e_1,\dots,e_9\}$, where first six elements are the same as in $J_6$ and 
$e_6=\begin{pmatrix}
0 & l\\
-l & 0
\end{pmatrix}$,
$e_7=\begin{pmatrix}
0 & il\\
-il & 0
\end{pmatrix}$,
$e_8=\begin{pmatrix}
0 & jl\\
-jl & 0
\end{pmatrix}$,
$e_9=\begin{pmatrix}
0 & kl\\
-kl & 0
\end{pmatrix}$.
\end{enumerate}
%\subsection{Purpose, importance, and motivation of the project.}
The purpose of this paper is to prove the following Theorem (and thus to show that low rank Jordan algebras fulfill the L'vov-Kaplansky Conjecture):
\begin{theorem}\label{main}
Let $p$ be any commutative non-associative polynomial. Then its evaluation on $J_n$ is either $\{0\}$, or $\mathbb{R}$ (i.e. one-dimensional subspace spanned by the identity element), the space $V$ of pure elements ($(n-1)$-dimensional vector space  $\left< e_1,\dots,e_{n-1}\right>$) , or $J_n$. 
\end{theorem}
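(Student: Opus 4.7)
The plan is to exploit two natural symmetries of the image $I = \Image p \subseteq J_n$: multilinearity makes $I$ invariant under $\mathbb{R}^*$-scaling, and since $O(V)$ acts on $J_n$ by algebra automorphisms (fixing $1$ and rotating $V$), $I$ is also $O(V)$-invariant; moreover $I$ is connected, being the continuous image of the connected space $J_n^m$. As a first reduction let $c = p(1, \ldots, 1)$. Because $1$ is the identity of $J_n$, each commutative monomial tree $\tau$ satisfies $\tau(x_1, 1, \ldots, 1) = x_1$, so $p(x_1, 1, \ldots, 1) = c\, x_1$ and hence $c J_n \subseteq I$; if $c \neq 0$ then $I = J_n$ and we are done, so from now on I assume $c = 0$.

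Writing each $x_i = \alpha_i + v_i$ with $\alpha_i \in \mathbb{R}$ and $v_i \in V$, set $I_s = \{\mathrm{sc}(y) : y \in I\} \subseteq \mathbb{R}$ and $I_v = \{\mathrm{vec}(y) : y \in I\} \subseteq V$. The $\mathbb{R}^*$-invariance of $I$ forces $I_s \in \{\{0\}, \mathbb{R}\}$, and combined with $O(V)$-invariance one has $I_v \in \{\{0\}, V\}$. If $I_v = \{0\}$ then $\mathrm{vec}(p) \equiv 0$ and $I = I_s$; symmetrically $I_s = \{0\}$ gives $I = I_v$; this resolves three of the four resulting subcases. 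The remaining case is $I_s = \mathbb{R}$, $I_v = V$. Here, for any $S \subseteq [m]$ the substitution $x_i = v_i$ ($i \in S$), $x_i = 1$ ($i \notin S$) collapses $p$, via multilinearity and $1 \circ x = x$, to a single contracted polynomial $q^{(S)}(v_S)$ valued in $\mathbb{R}$ for $|S|$ even and in $V$ for $|S|$ odd; the non-vanishing of $\mathrm{sc}(p)$ and $\mathrm{vec}(p)$ as polynomials forces some $q^{(S)}$ of each parity to be non-zero, and any such non-zero $q^{(S)}$ has image exactly $\mathbb{R}$ or $V$ by the scaling and $O(V)$-argument, so $\mathbb{R} \cup V \subseteq I$. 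Moreover, in the integral domain $\mathbb{R}[\mathrm{coords}(J_n^m)]$ the product $\mathrm{sc}(p) \cdot \mathrm{vec}(p)_j$ is a non-zero polynomial for at least one component $j$, so there exists $x^*$ at which both $\mathrm{sc}(p(x^*))$ and $\mathrm{vec}(p(x^*))$ are non-zero, giving a genuinely mixed element in $I$.

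To finish, it suffices to show that every value of the slope $\sigma(y) := \|\mathrm{vec}(y)\|/|\mathrm{sc}(y)| \in [0, \infty]$ is attained on $I \setminus \{0\}$, since slope parametrizes the $\mathbb{R}^* \cdot O(V)$-orbits on $J_n \setminus \{0\}$ and $I$ is invariant. Pick $a \in J_n^m$ with $p(a) = \alpha_0 \in \mathbb{R} \setminus \{0\}$ and $b \in J_n^m$ with $p(b) = v_0 \in V \setminus \{0\}$, and consider the polynomial curve $P(s) = p(a + sb) = \alpha_0 + s C_1 + \cdots + s^{m-1} C_{m-1} + s^m v_0 \in J_n$, whose slope is $0$ at $s = 0$ and tends to $\infty$ as $s \to \infty$. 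For generic $(a, b)$ in the sub-variety cut out by the two constraints, the $n \geq 2$ real components of $P(s)$ in $s$ share no common complex root (a standard resultant computation), so $P(s) \neq 0$ for all $s \in \mathbb{R}$; together with $v_0 \neq 0$ this ensures that $[P(s)]$ extends continuously to $s = \infty$, giving a continuous map $\mathbb{R} \cup \{\infty\} \to \mathbb{P}(J_n)$, and composing with the continuous slope quotient $\mathbb{P}(J_n) \to [0, \infty]$ yields a compact connected subset of $[0, \infty]$ that contains both endpoints $0$ and $\infty$, hence equals $[0, \infty]$. Together with the $\mathbb{R}^* \cdot O(V)$-invariance of $I$ this forces $I = J_n$, completing the proof. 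The main technical hurdle is exactly this resultant/genericity step, ensuring that within the constraint variety one can choose $(a, b)$ whose curve $P(s)$ avoids the common zero locus of its component polynomials on the parameter line.
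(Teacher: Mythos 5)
Most of your argument is sound and runs parallel to the paper's two lemmas: your $O(V)$-invariance and scaling observations are exactly the paper's Lemma~\ref{orbit}, your parity analysis of the contracted polynomials $q^{(S)}$ is the paper's Lemma~\ref{basic}, and the reduction via $p(1,\dots,1)$ and the case split on $(I_s,I_v)$ are correct (and the $p(1,\dots,1)\neq 0$ shortcut is a nice touch not in the paper). The problem is concentrated in the one case that actually requires work, $I_s=\R$ and $I_v=V$, where you must produce elements of every slope in $(0,\infty)$. There your argument rests on the claim that for generic $(a,b)$ subject to $p(a)\in\R\setminus\{0\}$, $p(b)\in V\setminus\{0\}$, the curve $P(s)=p(a+sb)$ has no real zero. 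You label this ``a standard resultant computation,'' but it is not: you need the resultant (or rather the non-existence of a common real root of $n$ component polynomials whose degrees and coefficients depend on $(a,b)$) to be generically non-vanishing \emph{on the constrained locus} $\{\operatorname{vec}(p(a))=0\}\times\{\operatorname{sc}(p(b))=0\}$, and genericity in the ambient space says nothing about a subvariety --- the relevant discriminant could vanish identically there. Ruling that out requires exhibiting at least one witness pair, which you never do; you explicitly defer the ``main technical hurdle.'' And the hurdle is real: if $P$ vanishes at some $s_0\in(0,\infty)$, the image of $\sigma\circ P$ splits into the images of finitely many subintervals, the endpoint limits need not match up inside $I$ (the limiting direction $Q(s_0)=\lim P(s)/(s-s_0)^k$ is a limit of elements of $I$, and $I$ is not known to be closed), and your intermediate-value argument no longer delivers all slopes.

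The paper avoids this entirely by exploiting multilinearity one slot at a time rather than moving along the diagonal curve $a+sb$. It builds the chain $A_1=p(x_1,\dots,x_m),\ A_2=p(z_1,x_2,\dots,x_m),\dots,A_{m+1}=p(z_1,\dots,z_m)$ to locate a single index $i$ and a tuple $(r_1,\dots,r_m)$ with $p(r_1,\dots,r_m)=r\in\R\setminus\{0\}$ such that replacing only $r_i$ by some $r_i^*$ gives a non-real value; after subtracting a multiple of $r_i$ to kill the real part one gets $\tilde r_i$ with $p(\dots,\tilde r_i,\dots)=v\in V\setminus\{0\}$, and then \emph{linearity in the $i$-th slot alone} gives $p(\dots,xr_i+y\tilde r_i,\dots)=xr+yv$ for all $x,y\in\R$. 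That two-parameter family already realizes every slope, with no curve, no compactification, and no genericity-on-a-subvariety issue. I recommend you replace your final step with this one-slot interpolation; everything before it can stay.
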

The complete proof of this Theorem gives us a hope to generalize this conjecture to Jordan algebras of higher rank, in particular for the famous Albert algebra (i.e. a 27-dimensional algebra of self-adjoint $3\times 3$ matrices with octonion entries) which occurs in Physics and such a result would be very applicable. More generally, classifying possible evaluations of polynomials on
algebras is part of investigating solutions of polynomial equations
which is fundamental to Algebra.

\section{Evaluations of multilinear polynomials on $J_n$}
Note that any polynomial evaluation (in particular, multilinear polynomial evaluation) is closed under each automorphism and thus contains each element with all its orbit under the automorphism group of the algebra.
Let us start by investigating the orbit of an element.
\begin{lemma}\label{orbit}
The orbit of any element $a+v$ (where $a\in\R, v\in V$) is the set of all elements $a+u$ where $||u||=||v||$. Here $||u||$ is the euclidean norm of the vector $u$, given by the scalar product defined by 
$\langle e_i,e_j \rangle=\delta_{ij}.$
\end{lemma}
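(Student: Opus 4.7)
The plan is to identify the automorphism group of $J_n$ with the orthogonal group of $V$ (fixing the identity), and then invoke transitivity of $O(V)$ on spheres.

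First I would show that the decomposition $J_n=\R\cdot 1\oplus V$ is preserved by every automorphism. Since $1$ is the (unique) multiplicative identity, any algebra automorphism $\phi$ satisfies $\phi(1)=1$, so by $\R$-linearity $\phi$ fixes $\R\cdot 1$ pointwise. To see $\phi(V)\subseteq V$, I would use the algebraic characterization coming from $v\circ v=\|v\|^2\cdot 1$: write $\phi(v)=b+u$ with $b\in\R$, $u\in V$, and compute
\[
\phi(v)\circ\phi(v)=b^2+\|u\|^2+2bu=\phi(v\circ v)=\|v\|^2\cdot 1\in\R.
\]
Hence $bu=0$, so $b=0$ or $u=0$. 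The case $u=0$ forces $\phi(v)=b\in\R$, which, since $\phi$ is a bijection fixing $\R$ pointwise, would force $v=b\in V\cap\R=\{0\}$; thus for $v\neq 0$ we must have $b=0$, i.e.\ $\phi(v)\in V$.

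Next, comparing $\phi(v)\circ\phi(v)=\|\phi(v)\|^2\cdot 1$ with $\|v\|^2\cdot 1$ from the same display shows $\|\phi(v)\|=\|v\|$, so $\phi|_V$ is orthogonal. This gives the ``easy'' inclusion: the orbit of $a+v$ lies in $\{a+u:\|u\|=\|v\|\}$.

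For the reverse inclusion, I would show that every orthogonal transformation $T\in O(V)$ extends to an automorphism $\phi_T$ of $J_n$ by $\phi_T(1)=1$ and $\phi_T|_V=T$. The verification is a short direct check: for $x=a+v$, $y=b+w$ one has $x\circ y=ab+\langle v,w\rangle+aw+bv$, and the orthogonality $\langle Tv,Tw\rangle=\langle v,w\rangle$ ensures $\phi_T(x\circ y)=\phi_T(x)\circ\phi_T(y)$. Since $O(V)=O(n-1)$ acts transitively on spheres of $V$, for any $u$ with $\|u\|=\|v\|$ we get $T\in O(V)$ with $Tv=u$, and $\phi_T(a+v)=a+u$.

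The whole argument is essentially routine; the only conceptual point that needs care is the algebraic characterization of $V$ via $v\circ v\in\R$, which is what rules out an automorphism mixing the scalar and pure components. I would flag that step as the one to write carefully, and the rest reduces to the well-known fact that $O(n-1)$ is the automorphism group of the Jordan algebra of a nondegenerate symmetric bilinear form fixing the unit.
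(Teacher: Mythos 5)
Your proposal is correct and follows essentially the same route as the paper: both directions hinge on the identity $v\circ v=\|v\|^2\cdot 1$ (giving that automorphisms preserve the splitting $\R\cdot 1\oplus V$ and the norm) and on extending orthogonal maps of $V$ to algebra automorphisms fixing $1$, with transitivity of $O(n-1)$ on spheres finishing the argument. Your write-up is somewhat more systematic than the paper's (which argues the converse via the specific relations $u\sim b+v$ and $e_1\sim ce_1$ rather than stating outright that every automorphism restricts to an element of $O(V)$), but the underlying ideas are identical.
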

\begin{proof}
First let us see that for any pure elements $u$ and $v$, $u\circ v=\langle u, v \rangle$. Indeed, if $u=\sum\limits_{i=1}^{n-1}c_ie_i$ and 
$v=\sum\limits_{i=1}^{n-1}d_ie_i,$
then 
$$u\circ v=\sum_{i,j=1}^{n-1}(c_ie_i)\circ (d_je_j)=\sum_{i,j=1}^{n-1}c_id_j \delta_{ij}=\sum_{i=1}^{n-1}c_id_i=\langle u, v \rangle.$$ In particular, $u^2=||u||^2$.

Let us show that if two pure elements have the same norm $||u||=||v||$ then $u\sim v$ (i.e. $u$ is an automorphic image of $v$.)
Let $j_1,\dots,j_{n-1}$ be arbitrary unit pairwise perpendicular pure elements (i.e. any orthonormal basis of $V$). Consider the linear map $\Phi$ defined on the standard base of $J_n$ as follows:
$\Phi(1)=1$ and for each $1\le i\le n-1$ 
$\Phi(e_i)=j_i.$
Then obviously $\Phi$ is an automorphism.
In particular, we can take $j_1=\frac{u}{||u||}$, and thus $u$ is similar to the element $||u||e_1$. By the same reason, $v$ is similar to the element $||v||e_1$ which is the same if $||u||=||v||$. Therefore, $u\sim v$.
Now, it is not difficult to see that $a+u\sim a+v$ and thus the orbit of $a+v$ contains all elements $a+u$ for which $||v||=||u||$. One can see that it cannot contain any other elements by two reasons:
\begin{enumerate}
    \item a pure element cannot be similar to non-pure, thus the orbit of an element can contain only elements with the same real part.
    \item if two elements with the same real part are equivalent then their pure parts must have the same norm.
\end{enumerate}
Indeed, we can show (1) as follows. Assume $u\sim b+v$ and $b\neq 0$. Thus, $u^2=||u||^2\sim (b+v)^2=b^2+v^2+2bv$ which must be real. Note that $b^2+v^2$ is real, but $2bv\in V$ can be real only if it equals $0$. This happens only if $b=0$ or $v=0$. Hence, if $b\ne 0$ then $v=0$ and $u\sim b$ and thus $u-b\sim 0$ which cannot happen. A contradiction.

Now, if we assume that  $u\sim v$, note that $u\sim ||u||e_1$ and $v\sim ||v||e_1$ and thus, $e_1\sim c e_1$ for $c=\frac{||u||}{||v||}$. Thus $1=e_1^2\sim c^2 e_1^2=c^2$ and $c^2=1$ which happens only if $c=\pm 1$. The number $c=\frac{||u||}{||v||}$ is positive and thus equals $1$. Therefore (2) holds as well.
\end{proof}

Now let us consider basic evaluations of a multilinear polynomial $p$, i.e. values $p(z_1,z_2,\dots,z_m)$ where each $z_i$ is a basic element. Moreover, we will let at this step $z_i$ to be arbitrary real or pure elements.
\begin{lemma}\label{basic}
Let $p(x_1,\dots,x_m)$ be an arbitrary commutative non-associative multilinear polynomial. Then for any elements $x_i\in J_n$ if each $x_i$ is either real or pure, the value of $p$ is either real or pure. Moreover, if the number of pure elements among the $x_i$ is odd, the value is pure, and if it is even, the value is real.
\end{lemma}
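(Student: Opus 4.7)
The plan is to exploit a $\mathbb{Z}/2\mathbb{Z}$-grading on $J_n$ induced by the decomposition $J_n = \mathbb{R} \oplus V$, where $\mathbb{R}$ (spanned by the identity) has grade $0$ and $V = \langle e_1,\dots,e_{n-1}\rangle$ has grade $1$. The first task is to verify that the Jordan product respects this grading, i.e.\ that $\mathbb{R}\circ \mathbb{R}\subseteq \mathbb{R}$, $\mathbb{R}\circ V\subseteq V$, and $V\circ V\subseteq \mathbb{R}$. The first two inclusions are immediate from the definition $1\circ x=x$. The third is exactly the content of the computation $u\circ v=\langle u,v\rangle$ already carried out in the proof of Lemma~\ref{orbit}.

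Next, since $p$ is multilinear, I would write it as a linear combination $p=\sum_\alpha \lambda_\alpha M_\alpha$, where each $M_\alpha$ is a (parenthesized, commutative) monomial in which each variable $x_1,\dots,x_m$ occurs exactly once. Because all monomials involve the same list of variables, if we fix an assignment in which each $x_i$ is either real or pure, then every $M_\alpha$ is evaluated at exactly the same multiset of homogeneous inputs, and in particular the parity of the number of pure inputs is the same for all $\alpha$. It therefore suffices to prove the statement for a single monomial $M_\alpha$, since the conclusion survives taking $\mathbb{R}$-linear combinations of elements of the same grade.

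I would then prove the statement for a monomial $M$ by induction on its degree $m$. The base case $m=1$ is trivial: $M=x_1$, and the grade of the value equals the grade of $x_1$. For the inductive step, any multilinear monomial of degree $m\ge 2$ has the form $M=M'\circ M''$, where $M'$ and $M''$ are multilinear monomials in disjoint subsets of the variables whose union is $\{x_1,\dots,x_m\}$. By the inductive hypothesis, the value of $M'$ (resp.\ $M''$) is homogeneous of grade equal to the parity of the number of pure inputs among its variables. By the grading of the Jordan product verified in the first step, $M'\circ M''$ lies in the component whose grade is the sum of these two parities modulo $2$, which equals the total parity of pure inputs to $M$. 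This completes the induction.

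There is no real obstacle here; the essential fact is that $V\circ V\subseteq \mathbb{R}$, which has already been supplied by Lemma~\ref{orbit}. The proof is then a clean grading argument plus a routine induction on the parse tree of a multilinear monomial.
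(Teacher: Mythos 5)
Your proposal is correct and follows essentially the same route as the paper: reduce to a single multilinear monomial, then induct on the degree using the fact that the product of two pure elements is real, of a real and a pure element is pure, and of two real elements is real (which is exactly your $\mathbb{Z}/2\mathbb{Z}$-grading of $J_n=\mathbb{R}\oplus V$). Your explicit remark that all monomials of a multilinear polynomial share the same variable set, so the parity of pure inputs is uniform across monomials and linear combinations stay homogeneous, is a small point the paper leaves implicit.
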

\begin{proof}
Any polynomial $p$ is a sum of monomials, and if $p$ is multilinear, then each monomial of $p$ has degree $1$ in each variable.
It is enough to show that the statement holds for any monomial, i.e. any product of our elements (regardless of the order and brackets).
We will prove this by induction on the degree $m$ of the polynomial $p$. First assume that $m=1$ or $2$. Then it is not difficult to see that the statement of the Lemma holds.
Now assume that it holds for each multilinear polynomial of degree $k<m$. Let us show that it holds for degree $m$ as well. 

Any commutative non-associative monomial can be uniquely written as a product of two submonomials, each has degree less than $m$. And according to the assumption of induction the statement holds for both submonomials.
Note, any product of two pure elements is real, any product of the real element and pure is pure, and, of course, any product of two real elements is real. Therefore this assumption holds also for the $m$-degree monomial and thus holds for arbitrary multilinear polynomial which is the sum of such monomials.
\end{proof}
Now we are ready to prove the main Theorem.

\begin{proof-of-thm}{\ref{main}}
If we consider basic evaluations of $p$, according to Lemma \ref{basic} they can be real or pure only. Therefore, we have $4$ cases:
\begin{enumerate}
    \item all basic evaluations vanish. It this case $p$ is PI. 

    \item all basic evaluations are real. In this case $\Image p=\R.$

    \item all basic evaluations are pure. In this case, according to Lemma~\ref{orbit}, $\Image p=V.$

    \item  there are and real and pure nonzero basic evaluations. 
\end{enumerate}
We should consider the last case, since all other cases are done. Let us show, in this case $\Image p=J_n.$

There are basic elements 
$x_1,\dots,x_m$ and $y_1,\dots,y_m$ such that $$p(x_1,\dots,x_m)=k\in\R\setminus\{0\}\  \text{and}\ p(y_1,\dots,y_m)=v\in V\setminus\{0\}.$$ Let us consider the following $m+1$ polynomial functions depending on $z_1,\dots,z_m$:
$A_1=p(x_1,x_2,\dots,x_m); A_2=p(z_1,x_2,\dots,x_m);
A_3=p(z_1,z_2,x_3,\dots,x_m);\dots; A_{m+1}=p(z_1,\dots,z_m)$.
Note that $A_1$ is a constant polynomial taking only one possible value (which is a nonzero scalar), for any $i\ \Image A_i\subseteq\Image A_{i+1}$ and $\Image A_{m+1}=\Image p$ includes nonscalar values. Therefore, there exists $i$ such that   $\Image A_i\subseteq\R$ and  $\Image A_{i
+1}\not\subseteq\R$. 
Consider generic elements
$z_1,z_2,\dots,z_m$. 
For them $A_i(z_1,z_2,\dots,z_m)$ is a nonzero real and $A_{i+1}(z_1,z_2,\dots,z_m)$ is not real.
We can simplify it, and write in the following way: there exist some collection $r_1,r_2,\dots,r_m,r_i^*$  of elements of $J_n$ such that 
$$p(r_1,r_2,\dots,r_m)=r\in\R\setminus\{0\}\ \text{and}\ 
 p(r_1,r_2,\dots,r_{i-1}, r_i^*, r_{i+1},\dots,r_m)\notin\R.$$ Assume that 
 $p(r_1,r_2,\dots,r_{i-1}, r_i^*, r_{i+1},\dots,r_m)=a+v$ for $a\in\R$ and $v\in V$. Then $v\neq 0$. If $a=c\cdot p(r_1,r_2,\dots,r_m)$ we can take 
$\tilde r_i=r_i^*-cr_i$, and 
$$p(r_1,r_2,\dots,r_{i-1}, \tilde r_i, r_{i+1},\dots,r_m)=v\in V\setminus\{0\}.$$
Note that for arbitrary real numbers $x$ and $y$ we have an element 
$$p(r_1,r_2,\dots,r_{i-1}, xr_i+yr_i\tilde r_i, r_{i+1},\dots,r_m)=xr+yv.$$
Thus, according to Lemma \ref{orbit} the evaluation of $p$ must contain all the elements of $J_n.$
\end{proof-of-thm}

\section{Examples}
Now let us show that options described in Theorem \ref{main} can be achieved as multilinear evaluations of $J_n$ for $n\ge 3$.
\begin{example}
\begin{enumerate}
\item
One of the most usefull non-associative polynomials is the associator $p(x,y,z)=(xy)z-x(yz)$. Let us check its evaluation on $J_n$. For that we consider its evaluations on basic elements. If one of the parameters is real, then the value of the associator  is zero. Therefore it is enough to check its value when all the parameters are pure. In this case the number of pure parameters is odd, and thus the value is pure. Note that it can be nonzero, in particular 
$p(e_1,e_1,e_2)=e_1^2\circ e_2-e_1(e_1e_2)=e_2-0=e_2.$ Therefore, $\Image p=V$.
\item We can construct an example of a central polynomial as well: if $p(x,y,z)$ is the associator, then $p(x_1,y_1,z_1)\circ
p(x_2,y_2,z_2)$ takes values which are the products of any two pure elements, and this set is $\R$.
\item Any finite-dimensional algebra is a PI algebra, however let us construct a multilinear example of a PI. In particular we can take the following:
$p(p(x_1,y_1,z_1)\circ p(x_2,y_2,z_2),y_3,z_3)$ where $p$ is the same associator as in the previous examples. In this case one of the parameters of the associator is real and since algebra is commutative all its values are zeros only.
\item Of course, evaluation can be the whole algebra, in particular for the polynomial $f(x)=x$.
\end{enumerate}
\end{example}

%\section{Summary}
%This project gives a rise to polynomial equations on Jordan algebras which are very equal for the mordern Physics.

\Addresses

\end{document}